\newtheorem{theorem}{Theorem}
\newtheorem{definition}[theorem]{Definition}
\newtheorem{lemma}[theorem]{Lemma}
\newcommand{\R}{\mathbb{R}}
\newcommand{\Z}{\mathbb{Z}}
\title{\LARGE \bf
On the Inapproximability of the Discrete Witsenhausen Problem 
}
\author{Alex Olshevsky$^{1}$% <-this % stops a space
\thanks{$^{1}$Alex Olshevsky is with the Department of Electrical and Computer Engineering and the Division of Systems Engineering, Boston University, Boston, USA, 
        {\tt\small alexols@bu.edu}%
}}
\begin{document}

\maketitle
\thispagestyle{empty}
\pagestyle{empty}

%%%%%%%%%%%%%%%%%%%%%%%%%%%%%%%%%%%%%%%%%%%%%%%%%%%%%%%%%%%%%%%%%%%%%%%%%%%%%%%%
\begin{abstract} We consider a discrete version of the Witsenhausen problem where all random variables are bounded and take on integer values. Our main goal is to 
understand the complexity of computing good strategies given the distributions for the initial state and second-stage noise as inputs to the problem. Following Papadimitriou 
and Tsitsiklis \cite{papadimitriou1986intractable}, who showed that computing the optimal solution  is NP-complete, we construct a sequence of problem instances with the initial state uniform over
a set of size $n$ and the noise  uniform over a set of size at most $n^2$, such that finding a strategy whose cost is a multiplicative $n^{2-\epsilon}$ approximation to the optimal cost is NP-hard for any $\epsilon > 0$.

\end{abstract}

%%%%%%%%%%%%%%%%%%%%%%%%%%%%%%%%%%%%%%%%%%%%%%%%%%%%%%%%%%%%%%%%%%%%%%%%%%%%%%%%
\section{Introduction} Witsenhausens's seminal counterexample \cite{witsenhausen1968counterexample} demonstrated that linear strategies are not always always in sequential stochastic control. The counterexample consists of a two-agent optimization 
problem with what has come to be known as a non-classical information pattern, in that it involves two agents acting in sequence, with the second agent having no knowledge of the information seen by the first agent. In the decades since \cite{witsenhausen1968counterexample}, a considerable literature
has sprung up analyzing control problems with non-classical information patterns \cite{yuksel2013stochastic}. Nevertheless, a complete analysis of the Witsenhausen's original counterexample is lacking, though considerable progress has been made in understanding the relation between 
optimal strategies and information patterns \cite{basar2008variations, uribe2014computing, gupta2015existence, kulkarni2015optimizer, jose2015linear, nayyar2013decentralized}. 

The goal of this paper is to contribute to the literature which attempts to explain why Witsenhausen's problem is difficult. Our starting point is the paper \cite{papadimitriou1986intractable}, which considered a discrete version of the Witsenhausen counterexample where all the
random variables and controls were restricted to be integers. This problem formulation can be obtained by quantizing the Witsenhausen problem and rescaling \cite{ho1980another, saldi2017finite}. Furthermore, the distribution
of the initial state of the system  and the noise were viewed as inputs; in Witsenhausen's original formulation, both of these were taken to be Gaussian. It was shown in \cite{papadimitriou1986intractable} that computation of the optimal strategy for this version of the Witsenhausen problem 
is NP-complete. 

%The significance of this result is that it provides a partial explanation for why the Witsenhausen problem has proved challenging. Indeed a slightly generalized version of the original counterexample -- with arbitrary distributions for the initial state and noise -- cannot, after quantization, be solved 
%by any polynomial-time algorithm conditional on $P \neq NP$.  

While such NP-hardness results do not have any implications for Witsenhausen's original counterexample, in the generalized scenario where the initial state and noise have arbitrary distributions, they have a fairly powerful message. Indeed, let us consider what would count as a solution of the Witsenhausen problem in this more general scenario. Presumably, one would want a formula for the optimal strategy as a function of  the initial state and noise distributions. However, such a formula would be quite useless if it could not be 
evaluated efficiently. Thus at the very least there should exist an efficient algorithm for the computation of the optimal strategy, and it is exactly this that \cite{papadimitriou1986intractable} rules out. 

Our goal in this paper is to strengthen the results of \cite{papadimitriou1986intractable}. We seek to address the question of whether it is possible to find approximately optimal solutions to the Witsenhausen problem. It might initially seem that there are reasons to be hopeful. Indeed, the reduction in \cite{papadimitriou1986intractable} reduces 
the Witsenhausen problem to a 3D matching problem, and, although 3D matching is NP-hard, a $4/3+\epsilon$ approximation algorithm is available for any $\epsilon>0$ \cite{cygan2013improved}. Moreover, constant factor approximation results were derived in \cite{grover2009finite} for a different, but finite dimensional problem formulation, albeit 
with Gaussian noises. 

Unfortunately, our main result rules out the possibility of a favorable approximation with the discrete Witsenhausen problem with arbitrary initial state and noise distribution. We describe a family of examples, where the initial state is uniform over a set of $n$ integers, and the noise is uniform over a set of at most $n^2$ integers, and it is NP-hard to find a
strategy whose cost is upper bounded by $n^{2-\epsilon}$ times the cost of the optimal strategy, for any $\epsilon>0$. 

One might wonder if the multiplicative $n^{2-\epsilon}$ factor is the best one could do, i.e., if the problem might be even more difficult to approximate than that. In that direction, we show that if the initial distribution has support ${\cal X}$ and the noise distribution has support ${\cal Z}$, then it is always possible to approximate the optimal Witsenhausen strategy to within a multiplicative
factor of $|{\cal X}|^3 |{\cal Z}|^4$. Plugging in $|{\cal X}| = n$ and ${\cal Z} \leq n^2$ for the construction of the previous paragraph, we obtain that a multiplicative $n^{11}$ approximation is possible in that case. This shows that, while one might potentially improve the $n^{2-\epsilon}$-inapproximability result we described above, one cannot improve it too much.

The remainder of this paper is organized as follows. Section \ref{sec:background} contains technical background, including a formal definition of the Witsenhausen problem and the discretizations we  described above. Section \ref{sec:inapproximability} contains a proof of the $n^{2-\epsilon}$-inapproximability result while Section \ref{sec:approximability} contains a proof of the  $|{\cal X}|^3 |{\cal Z}|^4$ approximation result.

%\subsection{Notation} 
%
%For a discrete random variable $w$, ${\rm supp}(w)$ will denote the the support of $w$. 
%
%The notation $\kappa(G)$ denotes the chromatic number of the graph $G$, i.e., the minimum number of colors needed to color the vertices of $G$ so that no two neighbors share a color. 

\section{Background\label{sec:background}} We begin with an informal description of the Witsenhausen problem. Two agents attempt to stabilize a system by bringing its state close to zero in two time steps. The first agent observes the initial state $X_0$, which we assume to be a random variable with a known distribution. The first agent applies the control $u_1$, so that the  state becomes $X_0 + u_1$. Now the second agent can only see a noisy version $X_0 + u_1 + Z$ of the  state, where $Z$ is some random variable with a known distribution. It applies a control $u_2$ which is therefore constrained to be a function of $X_0 + u_1 + Z$. The final cost depends only on the size of the control applied by the first agent as well as the final distance to the origin: 
\[ E \left[ u_1^2 + K (X_0 + u_1 + u_2)^2 \right], \] where $K>0$ is some constant. In particular, the control applied by the second agent is ``free.'' 

%\begin{figure} 
%\begin{center}
%\includegraphics[scale=0.4]{wits}
%\end{center} \caption{A graphical representation of the Witsenhausen counterexample. The figure is from \cite{grover}.}
 %\label{fig:wits}
%\end{figure}

In the classical Witsenhausen counterexample \cite{witsenhausen1968counterexample}, it is assumed that the the initial state is $X_0 \sim N(0,\sigma_0^2)$ while the noise is $Z \sim N(0,1)$, but in this paper we will consider arbitrary distributions for $X_0,Z$. %In this case, the problem may be described by Figure \ref{fig:wits}. In this paper, however, we will not make this assumption and consider general random variables for the input and noise. 
We will find it convenient to reformulate the problem in a way that makes the inherent constraints explicit as follows. Given independent random variables $X_0, Z$ we are looking for maps $T: \R \rightarrow \R$, $\delta: \R \rightarrow \R$ which minimize the cost function \begin{small}
\begin{equation} \label{eq:wcost} E \left[ (T(X_0)-X_0)^2 + K (T(X_0) + \delta(T(X_0) + Z))^2 \right]. \end{equation} \end{small} 
Moreover, we will denote this quantity as $\Phi(p_{X_0}, p_{Z}, T, \delta)$ and refer to it as the ``Witsenhausen cost.'' Furthermore, we will refer to $E[(T(X_0)-X_0)^2]$ as the ``first-stage'' or ``transportation cost,'' while $E \left[ [T(X_0)+\delta(T(X_0) + Z))]^2\right]$ will be referred to as the ``second-stage'' cost.

The discrete Witsenhausen problem, defined formally next, is simply the restriction of this problem to random variables and maps which take on integer values. For convenience, in the sequel we use $\Z$ to denote the set of integers. 

\medskip

\begin{definition} Let $X_0, Z$ be independent bounded random variables taking on integer values with probability mass functions $p_{X_0}, p_Z$. The Witsenhausen problem asks for maps $T: \Z \rightarrow \Z$ and $\delta: \Z \rightarrow \Z$ achieving the minimum in Eq. (\ref{eq:wcost}). 
We will use $\Phi^*(p_{X_0}, p_{Z})$ to refer to the optimal cost as a function of the problem parameters.\footnote{A common convention in the literature is to specify the distribution of $Y=T(X_0) + Z$ conditioned on $T(X_0)$, but since $X_0$ and $Z$ are independent here, it is easier to simply specify the distribution of $Z$.} 
\end{definition} 

\medskip

This problem was essentially introduced in \cite{ho1980another}. It is not hard to see that an optimal solution exists: we can restrict our attention to a finite set of maps $T,\delta$, as there is no need to consider maps which move some values in ${\cal X}$ too far. 
It is also standard that, given $T$, 
the corresponding $\delta$ can be found by solving a  least squares problem. 

%The principal reason for considering it is that one can approximate the solution of the Witsenhausen problem by a discretization process by quantizing all the random variables to multiples of some small $\Delta$, which can then be rescaled into an equivalent problem with integer random variables. This procedure can produce an approximation of the optimal solution for the continuous time Witsenhausen problem -- see \cite{saldi2017finite}. 

%The second reason is that it is possible to consider the complexity of the Witsenhausen problem without dealing with notions of complexity designed to deal with real numbers. Indeed, the next definition defines the decision version of the discrete Witsenhausen problem (DDW), which is the main problem we will consider in this paper. 

%\medskip

%\begin{definition} Given \begin{itemize} 
%\item Distributions $p_{X_0}, p_z$ of finitely supported integer random variables $X_0,z$, with each entry of $p_{X_0}, p_z$ a rational number. 
%\item An integer $B \geq 0$. 
%\end{itemize} The decision version of the discrete Witsenhausen (DDW) problem asks for a yes/no answer to whether there exist maps $T: \Z \rightarrow \Z$ and $\delta: \Z \rightarrow \Z$ achieving a cost less than $B$ in Eq. (\ref{eq:wcost}).
%\end{definition} 

%\medskip

Our goal in this paper is to prove the following theorem, which refines a result of \cite{papadimitriou1986intractable} that the discrete Witsenhausen problem is NP-hard.  We will adopt the convention of using 
${\cal X}$ to refer to the support of $X$ and ${\cal Z}$ to refer to the support of $Z$. 

\begin{theorem} \label{thm:imposs} Consider the discrete Witsenhausen problem restricted to problem instances where $|{\cal X}|=n$ and $|{\cal Z}| \leq n^2$. Unless $P=NP$, for any $\epsilon > 0$ there does not exist a polynomial-time 
algorithm which returns a number $\phi$ satisfying 
\[ \phi \leq n^{2-\epsilon} \Phi^*(p_{X_0},p_Z) \] such that $\phi = \Phi(p_{X_0}, p_{Z}, T, \delta)$ for some choice of maps $T,\delta$. \label{thm:innap} \end{theorem}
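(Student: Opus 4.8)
The plan is to establish the result through a \emph{gap-producing reduction}: starting from a known NP-complete problem (most naturally a packing/exact-cover problem, whose conflict structure matches the signaling constraints below), I would build a family of discrete Witsenhausen instances in which $\Phi^*$ takes a small value on ``yes'' instances and a value larger by a factor exceeding $n^{2-\epsilon}$ on ``no'' instances. A gap of this size suffices because of the precise wording of the theorem: a purported polynomial-time algorithm returns an \emph{achievable} cost $\phi = \Phi(p_{X_0},p_Z,T,\delta)$ with $\phi \le n^{2-\epsilon}\Phi^*$. On a ``yes'' instance this forces $\phi$ below the gap threshold, while on a ``no'' instance achievability forces $\phi \ge \Phi^*$, which is at least the threshold; hence the single returned number $\phi$ decides the original problem, contradicting $P \ne NP$. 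Note that deciding whether $\Phi^*=0$ is itself easy — it happens iff the identity placement $T=\mathrm{id}$ already yields collision-free observations $X_0+Z$, a polynomial check — so the construction must make $\Phi^*$ strictly positive even on ``yes'' instances, and the hardness therefore lives in the \emph{ratio} of two positive costs.

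The engine of the construction is the signaling structure of the cost in Eq.~(\ref{eq:wcost}). Given a placement $T$, the second agent observes $y=T(X_0)+Z$ and can drive the second-stage cost to zero precisely when $y$ determines $T(X_0)$, i.e.\ when the spread-out sets $T(x)+{\cal Z}$ attached to distinct target values do not overlap; an ambiguous $y$ forces $\delta(y)$ to compromise and pay a positive penalty. I would thus encode the combinatorial instance so that a cheap \emph{conflict-free} placement of the $n$ points of ${\cal X}$ exists if and only if the instance is a ``yes'' instance, using the noise distribution $p_Z$ to carve out the conflict pattern — this is where up to $\sim n^2$ distinct noise values are required, matching $|{\cal Z}|\le n^2$. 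Amplification to an $n^{2-\epsilon}$ gap comes from rescaling: placing targets as multiples of a large integer $D$ makes any forced collision, or any point dragged to a distant ``safe'' slot, cost on the order of $K D^2$, whereas the first-stage cost of the intended placement stays small; since each value carries probability $1/n$, one forced error contributes $\sim KD^2/n$, and tuning $D$, $K$, and the baseline yields the desired ratio under the $|{\cal Z}|\le n^2$ ceiling.

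The step I expect to be the main obstacle is the \emph{lower bound on the ``no'' instances}. Exhibiting a cheap strategy on a ``yes'' instance is routine, but proving that \emph{every} pair $(T,\delta)$ is expensive on a ``no'' instance requires ruling out all clever compromises. One must establish a genuine dichotomy — any placement keeping the first-stage cost small necessarily reintroduces an unavoidable observation collision, forcing a large second-stage cost, while any placement separating the observations must drag some point a distance $\Omega(D)$, forcing a large first-stage cost — and this must hold uniformly over all integer-valued maps and over the second agent's least-squares freedom in choosing $\delta$. Making this trade-off quantitatively tight enough to beat $n^{2-\epsilon}$, while simultaneously respecting $|{\cal X}|=n$ and $|{\cal Z}|\le n^2$, is the delicate part; the remaining bookkeeping — computing the $E[X_0^2]$-type baselines and checking that the reduction runs in polynomial time — is routine.
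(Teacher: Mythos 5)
Your high-level framing is sound: you correctly observe that the theorem's requirement that $\phi$ be an \emph{achievable} cost forces $\phi \ge \Phi^*$, so a gap reduction suffices; that $\Phi^*=0$ is easy to detect, so the hardness must live in the ratio of two positive costs; and that the signaling/collision structure of the second stage (with $|{\cal Z}|\le n^2$ noise values carving out a conflict pattern on pairs of points) is the right engine. All of this matches the paper. The difficulty is that you have deferred exactly the step on which the entire result turns, and the mechanism you propose for it does not work.

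The genuine gap is your choice of source problem. A reduction from a yes/no NP-complete problem such as exact cover or packing cannot by itself yield an $n^{2-\epsilon}$ multiplicative gap for every $\epsilon>0$: a ``no'' instance of such a problem only guarantees that the desired combinatorial object does not exist, not that every strategy must pay a cost \emph{quantitatively} far above the ``yes'' baseline. Your proposed amplification --- placing targets at multiples of a large $D$ so that a forced collision costs $\sim KD^2/n$ --- does not close this, because on a ``no'' instance the optimizer will simply decline to collide and instead perturb a few points by slightly more than the intended placement would; nothing in a plain decision problem forces that alternative to be expensive, and this is precisely why the Papadimitriou--Tsitsiklis reduction gives only exact NP-hardness rather than inapproximability. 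The paper circumvents this by starting not from a decision problem but from an \emph{inapproximability} result: Zuckerman's PCP-based theorem that distinguishing chromatic number $n^{\epsilon}$ from $n^{1-\epsilon}$ is NP-hard. It then introduces an intermediate quantity, the $\ell_2$-chromatic number $\gamma^*$, proves $\kappa^2 \ge \gamma^* \ge (\kappa-2)^3/(12n)$ so that the $n^{1-\epsilon}$ coloring gap is amplified to an $n^{2-O(\epsilon)}$ gap in $\gamma^*$, and finally encodes a graph $G$ into a Witsenhausen instance whose optimal cost \emph{equals} $\gamma^*(G)$: the vertices become a scaled Sidon set of order $4$ (so that the noise values $(x_i-x_j)/2$ for edges $(i,j)$ create collisions exactly on the edges of $G$ and nowhere else), and $K>n^5$ forces the second-stage cost to zero, reducing the Witsenhausen cost to $\frac{1}{n}\sum_i (T(x_i)-x_i)^2$. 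Without importing a gap from an already-inapproximable problem (or supplying some other amplification argument you have not given), the dichotomy you flag as ``the main obstacle'' cannot be established with the strength the theorem requires.
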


\medskip

We also show that the $n^{2-\epsilon}$ factor in the inapproximability result cannot be improved too much.

\begin{theorem} There is a polynomial-time algorithm which returns $T,\delta$ satisfying 
\[  \Phi(p_{X_0}, p_{Z}, T, \delta) \leq |{\cal X}|^3 |{\cal Z}|^4 \Phi^*(p_{X_0},p_Z) \] \label{thm:polybound}
\end{theorem}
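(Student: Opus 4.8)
The plan is to treat $\delta$ as determined by $T$ through the least-squares step already noted above, so that the algorithm only has to produce a good map $T$; throughout I identify $T$ with the vector of integer targets $a_i = T(x_i)$ and write $p_i = p_{X_0}(x_i)$ and $q(z) = p_Z(z)$. The first thing I would record is a clean criterion for when the second-stage cost can be driven to zero: this happens exactly when the signal $S = T(X_0)$ is recoverable from $S+Z$, i.e. when distinct target values have disjoint ${\cal Z}$-translates. Equivalently, two distinct targets are ``confusable'' iff their difference lies in the difference set $D = ({\cal Z}-{\cal Z})\setminus\{0\}$, and $|D|\le |{\cal Z}|^2$. Since any window of $|{\cal Z}|^2$ consecutive integers contains a value outside $D$, there is always an allowed (decodable) spacing of size at most $|{\cal Z}|^2$; hence any single pair of points can be separated decodably by moving one of them a distance at most $|{\cal Z}|^2$, at squared cost at most $|{\cal Z}|^4$ times its probability. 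This is the combinatorial source of the $|{\cal Z}|^4$ in the bound, and it is exactly the structure the crude grid strategy (spacing larger than $\max{\cal Z}-\min{\cal Z}$) fails to exploit.

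The algorithm I propose first tests whether $\Phi^*=0$ by checking directly whether the identity map already has disjoint translates, which is decidable in polynomial time; if so it returns the identity strategy and is exactly optimal. Otherwise it returns the cheaper of two strategies: (i) the identity map with its least-squares $\delta$, of cost $A_{\mathrm{id}} = K$ times the irreducible error of estimating $X_0$ from $X_0+Z$; and (ii) a fully decodable strategy (second-stage cost zero), of transport cost $A_{\mathrm{dec}}$, obtained by sweeping the support in sorted order and greedily placing each target at the nearest position whose differences to the previously placed targets avoid $D$. Using the per-pair separation bound above, strategy (ii) is constructible in polynomial time and $A_{\mathrm{dec}}$ is bounded by $|{\cal X}|^2 |{\cal Z}|^4$ times a worst-case per-point weight. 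Including both strategies is essential: (ii) is the competitor when $\Phi^*$ is dominated by transport (large $K$), while (i) is the competitor when $\Phi^*$ is dominated by residual estimation error (small $K$).

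The heart of the argument is the lower bound $\Phi^* \ge \min(A_{\mathrm{id}}, A_{\mathrm{dec}})/(|{\cal X}|^3|{\cal Z}|^4)$, which I would establish by a pairwise charging argument against the optimal $(T^*,\delta^*)$. For each pair of support points the optimal solution does one of three things. It \emph{merges} them (equal targets), paying transport at least $\frac{1}{2}(x_i-x_j)^2\min(p_i,p_j)\ge \frac{1}{2}\min(p_i,p_j)$, since $x_i\neq x_j$ are integers. It \emph{separates} them decodably, a cost already captured by $A_{\mathrm{dec}}$. Or it leaves them \emph{confusable}, in which case some observation $y$ is reached from both, and because the two resulting final states are integers differing by a nonzero amount, the second-stage cost at $y$ is at least $K\min(p_i q(z), p_j q(z'))$. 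Aggregating these forced costs, bounding the least positive probability below by $1/|{\cal X}|$ and a single noise weight by $1/|{\cal Z}|$, and matching against $A_{\mathrm{id}}$ and $A_{\mathrm{dec}}$ produces the factor $|{\cal X}|^3|{\cal Z}|^4$; the extra $|{\cal X}|$ over the achievable bound comes from charging per point rather than per unit of probability.

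The step I expect to be the main obstacle is precisely this lower bound, and specifically the claim that no \emph{mixture} of merging, separating, and tolerating confusion can beat both (i) and (ii) by more than the stated polynomial factor. The optimal map may merge some clusters, split others at cheap spacings drawn from the gaps of $D$, and absorb residual confusion wherever $K$ is small; I must show that this freedom cannot simultaneously make the transport much smaller than $A_{\mathrm{dec}}$ and the estimation error much smaller than $A_{\mathrm{id}}/K$. Controlling this Witsenhausen-type tradeoff --- essentially that the estimation error cannot be driven below the identity's irreducible value without paying transport comparable to the decodable cost --- is where the real work lies, together with verifying that the greedy decodable construction runs in genuine polynomial time even though the target coordinates may be exponentially large integers.
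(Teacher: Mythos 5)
Your proposal has a genuine gap, and you have correctly located it yourself: everything rests on the lower bound $\Phi^* \ge \min(A_{\mathrm{id}}, A_{\mathrm{dec}})/(|{\cal X}|^3|{\cal Z}|^4)$, which you describe as ``where the real work lies'' but do not prove. Worse, that bound is false for the algorithm as you have specified it. Take ${\cal X}=\{0,1\}$ with $p_{X_0}(0)=\epsilon$, $p_{X_0}(1)=1-\epsilon$, ${\cal Z}$ uniform on $\{0,1\}$, and $K=2/\epsilon$. The strategy $T(0)=-1$, $T(1)=1$ is decodable (the translate sets $\{-1,0\}$ and $\{1,2\}$ are disjoint) and costs $\epsilon$, so $\Phi^*\le\epsilon$. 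But your strategy (i), the identity, pays second-stage cost $K\cdot\Pr[Y=1]\cdot\epsilon=\frac{2}{\epsilon}\cdot\frac{1}{2}\cdot\epsilon=1$, and your strategy (ii), the greedy sweep in \emph{sorted order}, places $T(0)=0$ first and is then forced to move the heavy point $x=1$ (since $\pm 1\in D$), paying $1-\epsilon$. Hence $\min(A_{\mathrm{id}},A_{\mathrm{dec}})/\Phi^*\ge(1-\epsilon)/\epsilon$, which exceeds $|{\cal X}|^3|{\cal Z}|^4=128$ once $\epsilon<1/130$. The failure is not cosmetic: it shows that the choice of which points to leave in place must be governed by probability mass rather than position, and that the two extreme strategies alone do not suffice.

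The paper's proof repairs exactly this. It orders the support by decreasing probability and, for every $k=0,\dots,n$, builds a strategy $T^k$ that fixes the $k$ heaviest points and greedily displaces the rest (each by at most $|{\cal X}||{\cal Z}|^2$, which is essentially your difference-set observation) so that the only surviving collisions are among the fixed points; it then returns the best of these $n+1$ candidates. The comparison with the optimum is made at one well-chosen index rather than through a pairwise charging argument: letting $l$ be the first index, in probability order, at which $T^*$ moves a point, one gets $\Phi_1(T^*)\ge p_l$ while $\Phi_1(T^l)\le(|{\cal X}|p_l)(|{\cal X}||{\cal Z}|^2)^2=p_l|{\cal X}|^3|{\cal Z}|^4$, and $\Phi_2(T^l)\le\Phi_2(T^*)$ because both maps fix $x_1,\dots,x_{l-1}$, $T^l$ has zero second-stage cost off that set, and revealing to the second agent whether $X_0$ landed in that set can only lower the second-stage cost of $T^*$. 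This sidesteps entirely the ``mixture of merging, separating, and tolerating'' tradeoff that you flag as the main obstacle. If you wish to salvage your two-strategy framing you would at minimum have to reorder the greedy by probability, and you would still face the case where the optimum tolerates some collisions and resolves others; the interpolation over all $k$ is what makes that case disappear. (Your worry about exponentially large coordinates is not an issue: all displacements are bounded by $|{\cal X}||{\cal Z}|^2$, so all numbers handled remain polynomial in the input size.)
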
 

\medskip

Indeed, plugging in $|{\cal X}|=n, |{\cal Z}|=n^2$ into this last theorem, we obtain that in the setting described by Theorem \ref{thm:innap}, this provides an $n^{11}$ multiplicative approximation.

\section{Proof of Theorem \ref{thm:innap}\label{sec:inapproximability}}

We now turn to a sequence of lemmas whose culmination will be the proof of Theorem \ref{thm:innap}. Our starting point is a definition which later on will be key to the way we will definite the initial state $X_0$.

\medskip

\begin{definition} An integer set $S$ is called a Sidon set of order $p$ if all the sums 
\[ s_1 + s_2 + \cdots + s_p \] with 
$s_1, \ldots, s_p \in S$ and $s_1 \leq s_2 \leq \cdots \leq s_p$ are distinct.

\medskip

For example, $S=\{1,2,4\}$ is a Sidon set of order $2$ because the pairwise sums of elements from this set are distinct; but  
%\begin{eqnarray*} 1 & + & 1 \\ 
%1 & + & 2 \\ 
%1 & + & 4 \\ 
%2 & + & 2 \\ 
%2 & + & 4 \\ 
%4 & + & 4 
%\end{eqnarray*} are distinct. By contrast, 
$S=\{1,2,3,4\}$ is not a Sidon set of order $2$ because $3+3=4+2$.
\end{definition}

It is well-known that Sidon sets of arbitrary order exist and can be easily constructed. We will need the following variation of this fact, which is very similar to a lemma from \cite{papadimitriou1986intractable}.

\medskip

\begin{lemma} There exists a Sidon set $S$ of order $4$ with $|S|=k$ satisfying $S \subset \{1,2,\ldots, 20 k^8\}$. Moreover, it is possible to construct $S$ in polynomial time in $k$. \label{lem:sidon}
\end{lemma}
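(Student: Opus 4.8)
The plan is to construct $S$ greedily, adding one integer at a time while preserving the order-$4$ Sidon property, and to show that at each of the $k$ steps a valid choice still remains inside $\{1,\ldots,20k^8\}$. First I would record an elementary monotonicity fact: a Sidon set of order $4$ is automatically a Sidon set of every order $j \le 4$. This follows by padding: if two distinct $j$-element multisets from $S$ had equal sum, then appending $4-j$ copies of a fixed element of $S$ to each would produce two distinct $4$-element multisets with equal sum, contradicting the order-$4$ property. I will use this to rule out a degenerate case below.

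Next, suppose we have already built a set $S$ with $|S| = m \le k-1$ that is Sidon of order $4$, and we wish to append a new positive integer $x$. Adding $x$ destroys the property exactly when there are two distinct $4$-element multisets drawn from $S \cup \{x\}$, at least one containing $x$, with equal sum. Writing $a \ge 1$ and $b \ge 0$ for the multiplicities of $x$ on the two sides, the equation of sums becomes $(a-b)\,x = R - L$, where $L$ and $R$ are sums of $4-a$ and $4-b$ elements of $S$ respectively. If $a=b$, cancelling the copies of $x$ leaves two distinct multisets in $S$ of size $4-a < 4$ with equal sum, which the monotonicity fact forbids; hence $a \ne b$, and the equation pins down $x = (R-L)/(a-b)$ uniquely for each choice of data. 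Thus the number of \emph{forbidden} values of $x$ is at most the number of admissible tuples $(a,b,L,R)$.

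I would then bound this count. There are only $O(1)$ choices of the pair $(a,b)$, while $L$ and $R$ range over multisets of size at most $4$ from an $m$-element set, of which there are $O(m^4)$ each; moreover in the dominant case the two multisets have combined size at most $7$, so a careful count yields $O(m^7) = O(k^7)$ forbidden values. Since this is comfortably smaller than $20k^8$ even after also excluding the at most $k$ integers already chosen, an admissible $x \in \{1,\ldots,20k^8\}$ always exists, and the greedy process completes all $k$ steps. Finally I would observe that the construction runs in polynomial time: at each step the forbidden set is enumerated by ranging over the $O(k^7)$ tuples, each assembled from $O(1)$ elements and involving integers of $O(\log k)$ bits, after which a valid $x$ is selected.

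The main obstacle, and really the only place demanding care, is this counting step together with the degenerate $a=b$ case: one must check both that the monotonicity fact legitimately removes the situation in which $x$ is left unconstrained, and that the surviving forbidden set is genuinely of polynomial size, safely below the target interval length $20k^8$. Everything else is bookkeeping.
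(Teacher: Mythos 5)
Your proposal is correct and follows essentially the same greedy/inductive route as the paper: at each step the new element must avoid at most $O(k^7)$ values determined by tuples of existing elements, which fits inside $\{1,\ldots,20k^8\}$ by a counting/pigeonhole argument. Your explicit treatment of the degenerate equal-multiplicity case via the observation that an order-$4$ Sidon set is Sidon of every order $j \le 4$ is in fact slightly more careful than the paper, which only remarks in passing that such tuples impose no constraint on the new element.
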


\begin{proof} We prove this lemma by induction. When $k=1$, we can simply choose $S=\{1\}$. Now suppose we have a Sidon set  $S = \{s_1, s_2, \ldots, s_k\}$, with $s_i$ being distinct positive integers, and $\max_{i=1, \ldots, k} s_i \leq 20 k^8$. 
We construct a Sidon set of size $k+1$ by choosing a positive integer $s_{k+1} \leq 20(k+1)^8$ to add to $S$. 

To ensure this works, we need that 
\[ s_a + s_b + s_c + s_d \neq s_e + s_f + s_g + s_{k+1} \] for all possible choices $a \leq b \leq c \leq d, e \leq f \leq g$ from $a,b,c,d,e,f,g \in \{1, \ldots, k+1\}$. 
%Note that $k+1$ is a valid choice for  $a,b,c,d,e,f,g$. 
In other words, we need to have
\[ s_{k+1} \neq s_a + s_b + s_c + s_d - s_e - s_f - s_g. \] There are at most $(k+1)^7$ choices of $a,b,c,d,e,f,g$, and each inequality produces at most one value for $s_{k+1}$ to avoid. Indeed, it is possible for an inequality to produce no value for $s_{k+1}$ to avoid, for example 
if $s_{k+1}$ cancels from both sides. However, if this does not happen, then the corresponding choice of $a,b,c,d,e,f,g$ results in one value for $s_{k+1}$ to avoid.

It follows that this produces a Sidon set of order $4$ if we 
just place $s_{k+1}$ outside this set of at most $(k+1)^7$ values. 
We will place $s_{k+1}$ in the range $(20k^8, 20(k+1)^8]$, which we can always do by the pigeonhole principle, since  \begin{small}
\[ 20 (k+1)^8 - 20 k^8 >  20 \cdot 8  k^7 > (2k)^7 \geq  (k+1)^7. \]
\end{small}
Finally, we can construct $S$ via the above procedure which clearly takes polynomial time.
% Given an $S$ with $|S|=l$, we need to look at $(l+1)^7$ sums to find another element to add to $S$. It follows that to construct a set of size $k$ we need to do 
%$O(2^7 + 3^7 + \cdots + k^7) = O(k^8)$ operations. 

\end{proof}

The bounds of the lemma are rather loose and it is possible to improve them, but they suffice for our purposes. %. However, all that matters for us in this paper is that we can construct a Sidon set of order four in polynomial time in the number of elements $k$, and that furthermore the elements themselves are upper bounded by a polynomial in $k$. 

Our next step is discuss a variation on the notion of the chromatic number which we will need. Although the following discussion seems unrelated to Sidon sets, we will bring the two concepts together in our NP-hardness proof later on.

The chromatic number of a graph is the minimum number of colors needed to color the vertices so that no adjacent vertices share the same color; we will use $\kappa(G)$ to denote the chromatic number of the graph $G$. We will need to use a certain notion which we call the
$l_2$-chromatic number, which as far as we know has not been previously considered, and we motivate this notion with the following discussion. 

We may formulate the search for the chromatic number of the graph $G=(\{1, \ldots, n\},E)$ as minimizing 
\[ \Phi(\gamma) := \max_{i=1, \ldots, n} \gamma(i) - \min_{i=1, \ldots, n} \gamma(i), \] over all functions $\gamma: V \rightarrow \Z$ satisfying $\gamma(i) \neq \gamma(j)$ for all $(i,j) \in E$. Indeed, the objective $\Phi(\gamma)$ is precisely the number of colors needed minus one. 

%Of course, adding a constant to $\gamma$ does not affect the constraints $\gamma(i) \neq \gamma(j)$ for $(i,j) \in E$, and it does not affect the objective either. Consequently, we may shift $\gamma$ to be almost symmetrical about zero, e.g., by ensuring that the mean of $\gamma$ lies in $[0,1]$. Thus the search for the chromatic number is {\em almost} (but not quite) the same as the problem of minimizing instead
%\[ \Phi'(\gamma) := ||\gamma||_{\infty} := \max_{i=1, \ldots, n} |\gamma|. \] 
The quantity $\Phi(\gamma)$ may be thought of as a measure of dispersion. This motivated the introduction of  the $l_2$-chromatic number, which uses a slightly different measure of dispersion: the variance of the distribution of $\gamma(i)$ about zero. 

\medskip

\begin{definition} Given an undirected graph $G=(\{1,\ldots,n\},E)$, the $l_2$-chromatic number asks for a function $\gamma: V \rightarrow \Z$ satisfying $\gamma(i) \neq \gamma(j)$ for all $(i,j) \in E$ and  minimizing 
\[ \gamma^* := \frac{1}{n} \sum_{i=1}^n \gamma^2(i). \] \label{def:l2chrom}
\end{definition} 

\medskip

We remark that the graph $G$ should not have any self-loops, for otherwise the constraint $\gamma(i) \neq \gamma(j)$ for all edges $(i,j)$ in $G$ is impossible to satisfy. 

We next use the concept of Sidon sets to give a way to construct a discrete Witsenhausen problem starting from a graph. The ensuring sequence of lemmas will show that computation of the $l_2$-chromatic number on that graph will then be equivalent 
to computation of the optimal Witsenhausen strategy.

\medskip

\begin{definition} Given a graph $G=(\{1,\ldots,n\},E)$ and an integer $B$, construct an instance of the discrete Witsenhausen as follows:
\begin{itemize} 
\item Let $\{y_1, \ldots, y_n\}$ be Sidon set of order $4$ with $n$ elements, and set $x_i = l y_i$, where $ l = 4 \left( \lceil n^{1.5} \rceil + 1\right)$. Generate $X_0$ to be uniform over $x_1, \ldots, x_n$. 
\item Generate $Z$ to be uniform over all the pairs $(x_i - x_j)/2, (i,j) \in E$.
\item Set $K$ to be any number strictly bigger than $n^5$.  
\end{itemize} \label{def:reduc}
\end{definition} 

\medskip

Observe that the graph $G$ enters the definition of the corresponding discrete Witsenhausen problem solely through the distribution of $Z$. Observe further that the support of $Z$ always symmetric about the origin since $(i,j) \in E$ whenever $(j,i) \in E$. Note also that the support of the random variable $X_0$, i.e., the set $\{x_1, \ldots, x_n\}$, is a Sidon set of order $4$ with $n$ elements
 (because it is obtained via scaling each element of a Sidon set by the same factor $l$).  
%In particular, $x_i \neq x_j$ when $i \neq j$. 
%Moreover, because the graph $G$ is an ordinary graph (no self-loops allowed), we have that $0 \notin {\cal Z}$. 
Finally, note that this construction may be performed in polynomial time in $n$ as a consequence of Lemma \ref{lem:sidon} which tells us that the set $\{y_1, \ldots, y_n\}$ may be constructed in polynomial time; that all remaining operations take polynomial time is obvious. 

The equivalence of $l_2$-chromatic number on the original graph and the cost of the optimal Witsenhausen strategy in this construction is established in the following two lemmas.

\medskip

\begin{lemma} Suppose $0 \leq B \leq n^2$. If the  discrete Witsenhausen problem constructed in Definition \ref{def:reduc} has a solution with cost at most $B$, then  the $l_2$-chromatic number of the graph $G$ is at most $B$. \label{lem:ch->dw}
\end{lemma}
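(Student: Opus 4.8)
The plan is to read a proper coloring directly off the first-stage behaviour of any low-cost solution. Given maps $T,\delta$ with $\Phi(p_{X_0},p_Z,T,\delta)\le B$, I would define $\gamma(i) := T(x_i) - x_i$, which is integer-valued since $T$ maps integers to integers. Because the second-stage cost is nonnegative, the transportation cost alone is already bounded by $B$, and it coincides exactly with the $l_2$-objective:
\[ \frac{1}{n}\sum_{i=1}^n \gamma(i)^2 = \frac{1}{n}\sum_{i=1}^n (T(x_i)-x_i)^2 = E\big[(T(X_0)-X_0)^2\big] \le B. \]
Thus the only remaining thing to establish is that $\gamma$ is a valid coloring, i.e. that $\gamma(i)\ne\gamma(j)$ whenever $(i,j)\in E$.

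The first substep is to force the second-stage cost to vanish. Since $X_0$ and $Z$ are independent and uniform, every pair $(X_0=x_i,\,Z=z)$ with $z\in{\cal Z}$ has probability at least $\frac{1}{n}\cdot\frac{1}{|{\cal Z}|}\ge \frac{1}{n^3}$, and since $T(x_i)+\delta(T(x_i)+z)$ is an integer its square is either $0$ or at least $1$. A single uncancelled pair would therefore contribute at least $K\cdot\frac{1}{n^3} > \frac{n^5}{n^3}=n^2\ge B$ to the cost, which alone exceeds $B$. Hence for every $i$ and every $z\in{\cal Z}$ we must have $\delta(T(x_i)+z) = -T(x_i)$. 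This is a routine magnitude estimate and I expect it to be the easy part.

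The main obstacle, and the step that actually uses the structure of the construction, is showing $\gamma$ is proper. Suppose toward a contradiction that $(i,j)\in E$ but $\gamma(i)=\gamma(j)$, i.e. $T(x_i)-T(x_j)=x_i-x_j$. Because the support of $Z$ is symmetric about the origin, both $(x_j-x_i)/2$ and $(x_i-x_j)/2$ lie in ${\cal Z}$, so I would compare the two observations $T(x_i)+(x_j-x_i)/2$ and $T(x_j)+(x_i-x_j)/2$. A direct computation shows their difference is exactly $\gamma(i)-\gamma(j)=0$, so they equal a common integer $y$. The cancellation identity from the previous paragraph then forces $\delta(y)=-T(x_i)$ via the first pair and $\delta(y)=-T(x_j)$ via the second; but $T(x_i)\ne T(x_j)$, since $x_i\ne x_j$ and $T(x_i)-T(x_j)=x_i-x_j$. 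This contradicts $\delta$ being a single-valued function, so $\gamma$ must be proper. Combined with the objective bound above, this yields that the $l_2$-chromatic number of $G$ is at most $B$.

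The delicate bookkeeping is keeping straight which noise value is paired with which state and invoking the symmetry of ${\cal Z}$ at exactly the right moment; notably the Sidon property of $\{x_1,\dots,x_n\}$ plays no role in this direction, and I would flag that it is reserved for the converse lemma.
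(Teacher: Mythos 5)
Your proof is correct and follows essentially the same route as the paper's: define $\gamma(i)=T(x_i)-x_i$, use the size of $K$ together with the $1/n^3$ lower bound on each realization's probability to force the second-stage cost to vanish, and then use the symmetric pair $\pm(x_i-x_j)/2\in{\cal Z}$ to turn the single-valuedness of $\delta$ into the properness of $\gamma$. The only difference is a small streamlining: by arguing contrapositively you obtain $T(x_i)\neq T(x_j)$ for free from $\gamma(i)=\gamma(j)$ and $x_i\neq x_j$, so you never need the paper's separate step showing $T$ is injective on ${\cal X}$ via the spacing $|x_i-x_j|>4n^{1.5}$ (and your sign convention $\delta(\cdot)=-T(x_i)$ is actually the one consistent with the cost $(T+\delta)^2$, whereas the paper's Eq.~(\ref{eq:ssc-zero}) has a harmless sign slip).
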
 

\begin{proof} Let $T,\delta$ be maps which achieve a cost of at most $B$ in the resulting Witsenhausen problem. We will define 
\[ \gamma(i) = T(x_i) -  x_i, \] and argue that this choice of $\gamma$ works. The key observation is that, if the discrete Witsenhausen problem constructed in this way has a cost at most $B$, then it has zero second-stage cost, i.e., we must have with probability one that
\begin{equation} \label{eq:ssc-zero} T(X_0) = \delta(T(X_0)+Z).
\end{equation}  This follows because of the way $K$ was chosen. Formally, observe that if Eq. (\ref{eq:ssc-zero}) fails with positive probability, then, because $X_0,Z$ were constructed to be uniform over ${\cal X}$ and ${\cal Z}$, it fails with probability at least 
$(1/|{\cal X}|) (1/|{\cal Z}|)  \geq 1/n^3$. Moreover, when Eq. (\ref{eq:ssc-zero}) fails, then because both the left-hand side and the right-hand side of this equation are integer, it follows they differ by at least one.  Thus, in that case the expectation of the Witsenhausen cost of Eq. (\ref{eq:wcost}) is at least 
$(1/n^3) K \cdot 1 > B$. This is a contradiction. We have thus shown that Eq. (\ref{eq:ssc-zero}) holds with probability one.

In particular, this means that for all possible  $x_i,x_j \in {\cal X}$ such that $T(x_i) \neq T(x_j)$, and all possible  $z_a,z_b \in {\cal Z}$, we must have 
\begin{equation} \label{eq:ssneq} T(x_i) + z_a  \neq T(x_j) + z_b. \end{equation} Indeed, if Eq. (\ref{eq:ssneq}) fails, then it is immediate that a zero second-stage cost cannot be obtained. 

We now claim that, due to the way $X_0$ was defined in Definition \ref{def:reduc}, we can conclude that actually $T(x_i) \neq T(x_j)$ for all pairs $i,j=1, \ldots, n$, so that the conclusion of the previous paragraph is actually applicable to all pairs $i,j$. Indeed, suppose $T(x_i) = T(x_j)$ for some pair $i,j$.
 Since $|x_i - x_j| > 4 n^{1.5}$, we have that either $|T(x_i) - x_i| > 2 n^{1.5} $ or $|T(x_j) - x_j| > 2 n^{1.5}$. Either one of these will imply the first-stage transportation cost is strictly bigger than $n^2$ and thus strictly bigger than $B$. 
%Indeed, the expected value of the cost of Eq. (\ref{eq:ssc-zero}) would be strictly bigger than \[ \frac{1}{n} (2 \sqrt{Bn})^2 = 4 B \geq B. \] 

Putting the last two paragraphs together, we have that for all realizations $x_i, x_j, \in {\cal X}, z_i,z_j \in {\cal Z}$, we have that 
\[ T(x_i) + z_a \neq T(x_j) + z_b. \] In particular, 
\[ T(x_i) - T(x_j) \neq z_b - z_a, \] or 
\[ T(x_i) - x_i - (T(x_j) - x_j) \neq z_b - z_a - x_i + x_j. \] 
But if $(i,j)$ is an edge in $G$, then the right-hand side of this equations equals zero when
\[ z_b = \frac{x_i-x_j}{2}, z_a = -z_b, \] and these are both in ${\cal Z}$.    So we conclude that if $i$ and $j$ are neighbors in $G$, then
\[ T(x_i) - x_i - (T(x_j) - x_j) \neq 0 \] or 
$\gamma(i) \neq \gamma(j)$. Thus $\gamma(i)$ satisfies the constraint in the definition of the $l_2$-chromatic number (i.e., Definition \ref{def:l2chrom}). 

Finally, we observe that
\[ \gamma^* \leq \frac{1}{n} \sum_{i=1}^n \gamma(i)^2 = \frac{1}{n} \sum_{i=1}^n (T(x_i) - x_i)^2, \] and because the second-stage cost is zero, this is equal equal to the expected Witsenhausen cost, which is at most $B$ by assumption. 
\end{proof} 

\medskip

Note that Lemma \ref{lem:ch->dw} did not use that the support of $X_0$ is a Sidon set. The next lemma, which is just the converse of Lemma \ref{lem:ch->dw}, will use this fact. 

\medskip

\begin{lemma} Suppose $1 \leq B \leq n^2$. If the $l_2$-chromatic number of $G$ is at most $B$, then the discrete Witsenhausen problem constructed in Definition \ref{def:reduc} has a solution of cost at most $B$. \label{lem:uniquenoise}
\end{lemma}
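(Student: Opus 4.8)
The plan is to reverse the construction of Lemma~\ref{lem:ch->dw}. Let $\gamma$ be a coloring achieving the $l_2$-chromatic number, so that $\gamma(i)\neq\gamma(j)$ for every edge $(i,j)\in E$ and $\frac{1}{n}\sum_{i=1}^n \gamma^2(i)=\gamma^*\le B$. Define the first-stage map by $T(x_i)=x_i+\gamma(i)$. The first-stage cost is then exactly $\frac{1}{n}\sum_{i=1}^n (T(x_i)-x_i)^2=\frac{1}{n}\sum_{i=1}^n\gamma^2(i)\le B$, so everything reduces to exhibiting a second-stage map $\delta$ of zero cost. I will build $\delta$ so that $\delta(T(x_i)+z)=T(x_i)$ for every $x_i\in{\cal X}$ and every $z\in{\cal Z}$; this makes the second-stage cost vanish and leaves the total cost at most $B$.

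The only thing that can prevent such a $\delta$ from being well defined is a collision: two realizations $T(x_i)+z_a=T(x_j)+z_b$ with $z_a,z_b\in{\cal Z}$ but $T(x_i)\neq T(x_j)$. Hence the heart of the proof is to show that any such equality forces $T(x_i)=T(x_j)$, after which we may simply set $\delta(w)$ equal to the common value $T(x_i)$ on each attained $w$ (and arbitrarily elsewhere).

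To rule out bad collisions, I would write $z_a=(x_p-x_q)/2$ and $z_b=(x_r-x_s)/2$ for edges $(p,q),(r,s)\in E$, substitute $x_i=ly_i$ and $T(x_i)=ly_i+\gamma(i)$, clear denominators, and collect terms to obtain
\[ l\big(2y_i+y_p+y_s-2y_j-y_q-y_r\big)=2\gamma(j)-2\gamma(i). \]
The key quantitative observation is a separation of scales. From $\sum_i\gamma^2(i)\le nB\le n^3$ we get $|\gamma(i)|\le n^{1.5}$ for every $i$, so the right-hand side has absolute value at most $4n^{1.5}<l$, whereas the left-hand side is an integer multiple of $l$. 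Both sides must therefore vanish, giving simultaneously $\gamma(i)=\gamma(j)$ and $2y_i+y_p+y_s=2y_j+y_q+y_r$. The latter equates two sums of four elements of the Sidon set $\{y_1,\dots,y_n\}$, so the order-$4$ Sidon property forces the multisets $\{i,i,p,s\}$ and $\{j,j,q,r\}$ to coincide.

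A short multiset argument then closes the proof. If $i=j$ we already have $T(x_i)=T(x_j)$. Otherwise, since $i$ has multiplicity at least two on the left it must have multiplicity two on the right, forcing $q=r=i$; symmetrically $p=s=j$. But then $(p,q)=(j,i)\in E$, hence $(i,j)\in E$, and combined with $\gamma(i)=\gamma(j)$ this contradicts the coloring constraint of Definition~\ref{def:l2chrom}. Thus $i=j$ and no bad collision exists. I expect this final coupling to be the main obstacle: the Sidon property operates at the coarse scale $l$ while the coloring lives at the fine scale below $n^{1.5}<l$, and it is precisely the combination of the scale separation with the edge constraint on $\gamma$ — the one place where the Sidon structure of ${\cal X}$ is genuinely used, in contrast to Lemma~\ref{lem:ch->dw} — that makes the argument go through.
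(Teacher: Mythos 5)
Your proof is correct and follows essentially the same route as the paper: the same choice $T(x_i)=x_i+\gamma(i)$, the same reduction to showing the collision equation forces $i=j$, and the same two ingredients (the scale separation $|2\gamma(j)-2\gamma(i)|\le 4n^{1.5}<l$ forcing both sides of the cleared equation to vanish, then the order-$4$ Sidon multiset argument plus the edge constraint on $\gamma$ to kill the $i\neq j$ case). The only cosmetic difference is that you conclude both sides vanish in one step and inline the content of Lemma~\ref{lem:zsupport}, where the paper splits into a zero/nonzero case analysis and cites that lemma.
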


\medskip

Before we give a proof of this lemma, we require the following fact. 

\begin{lemma} $(x_i - x_j)/2 \in {\cal Z}$ if and only if $(i,j)$ is an edge in $G$. \label{lem:zsupport}
\end{lemma}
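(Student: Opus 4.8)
The plan is to prove Lemma \ref{lem:zsupport}, which asserts that $(x_i - x_j)/2 \in {\cal Z}$ if and only if $(i,j) \in E$. The ``if'' direction is immediate from the construction in Definition \ref{def:reduc}: whenever $(i,j)$ is an edge, $Z$ was defined to be uniform over exactly the values $(x_i - x_j)/2$ ranging over edges, so the support ${\cal Z}$ contains $(x_i - x_j)/2$ by definition. The entire content of the lemma is therefore the ``only if'' direction, which is where the Sidon property must be used.

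For the ``only if'' direction, I would argue by contradiction. Suppose $(x_i - x_j)/2 \in {\cal Z}$ but $(i,j)$ is not an edge of $G$. Since every element of ${\cal Z}$ has the form $(x_a - x_b)/2$ for some edge $(a,b) \in E$, this would force
\[ x_i - x_j = x_a - x_b \]
for some pair $(a,b) \neq (i,j)$ (as unordered index pairs, since $(a,b)=(i,j)$ would make $(i,j)$ an edge). Rearranging gives
\[ x_i + x_b = x_j + x_a. \]
This is an equality between two sums of two elements each drawn from the set $\{x_1, \ldots, x_n\}$. Recall from the remarks following Definition \ref{def:reduc} that $\{x_1, \ldots, x_n\}$ is a Sidon set of order $4$, and a Sidon set of order $4$ is in particular a Sidon set of order $2$, so all pairwise sums $x_p + x_q$ (with the indices ordered) are distinct.

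Hence the equality $x_i + x_b = x_j + x_a$ can only hold if the multisets $\{i,b\}$ and $\{j,a\}$ coincide. The main step is to check the resulting cases: either $i = j$ and $b = a$, or $i = a$ and $b = j$. The first case gives $i = j$, but $(x_i - x_j)/2 = 0$ cannot be a valid noise value arising from an edge unless $G$ has a self-loop, which is excluded; the second case gives exactly $(a,b) = (j,i)$, so $(i,j)$ is an edge after all (using that $E$ is symmetric), contradicting our assumption. Either way we reach a contradiction, establishing that $(x_i - x_j)/2 \in {\cal Z}$ forces $(i,j) \in E$. I do not anticipate a serious obstacle here; the only thing to be careful about is tracking the ordered-versus-unordered distinction among indices so that the Sidon property is invoked correctly, and handling the degenerate $i=j$ case via the no-self-loops assumption.
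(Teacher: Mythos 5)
Your proof is correct and follows essentially the same route as the paper: the ``if'' direction is immediate from the construction, and the ``only if'' direction is the same contradiction argument that uses the Sidon property of $\{x_1,\ldots,x_n\}$ to force the multiset equality $\{x_i,x_b\}=\{x_j,x_a\}$ and hence $i=a$, $j=b$. The one point of divergence is that you reduce to a two-term sum and invoke the claim that a Sidon set of order $4$ is in particular a Sidon set of order $2$; this is true under the paper's definition (pad a violating equality $s_1+s_2=t_1+t_2$ with two copies of a fixed element of $S$ to obtain two distinct sorted $4$-tuples with equal sums), but you assert it without justification, whereas the paper sidesteps the issue by doubling $x_i-x_j=x_a-x_b$ into the four-term identity $x_i+x_i+x_b+x_b=x_a+x_a+x_j+x_j$ and applying the order-$4$ property directly. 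Adding that one-line padding argument would close the only loose end; your explicit treatment of the degenerate $i=j$ (self-loop) case is, if anything, slightly more careful than the paper's, which instead relies on $x_a\neq x_b$.
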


\begin{proof} One direction is one immediate from Definition \ref{def:reduc}. On the other hand, suppose $(x_i - x_j)/2 \in {\cal Z}$. This means there exist neighbors $a,b$ in $G$  such that 
\[ \frac{x_i - x_j}{2} = \frac{x_a - x_b}{2} \] or 
\[ x_i + x_i + x_b + x_b = x_a + x_a + x_j + x_j \] Since $x_a \neq x_b$ and $\{x_1, \ldots, x_n\}$ is a Sidon set of order four, this implies that $x_i = x_a, x_b = x_j$. Thus $i$ and $j$ are neighbors. 
\end{proof} 

\begin{proof}[Proof of Lemma \ref{lem:uniquenoise}] Paralleling the proof of Lemma \ref{lem:ch->dw}, we define 
\[ T(x_i) = x_i + \gamma(i), \] where $\gamma$ is the coloring that achieves $l_2$-chromatic number at most $B$. For integers $x' \notin \{x_1, \ldots, x_n\}$, we can define $T(x')$ arbitrarily, as it does not affect the Witsenhausen cost. 
We will show that, with this choice, the second-stage cost is zero. Once this is shown, the proof will be complete as the $l_2$-chromatic number $(1/n) \sum_i \gamma^2(i)$ is just the transportation cost. 

To argue that the second stage cost is zero, we proceed by contradiction. The second stage cost is not zero if and only if 
%the sums 
%\[ T(x_i) + z_a, ~~ x_i \in {\cal X}, z_a \in {\cal Z} \] are not distinct. In other words, if the second stage cost is not zero, then 
there exist $x_i,x_j \in {\cal X}, z_a,z_b \in {\cal Z}$ with $T(x_i) \neq T(x_j)$ such that
\begin{equation} \label{eq:ssum} T(x_i) + z_a = T(x_j) + z_b \end{equation} But, as in Lemma \ref{lem:ch->dw}, we cannot have $x_i \neq x_j$ with $T(x_i)=T(x_j)$; indeed, by the same argument as Lemma \ref{lem:ch->dw}, this implies that $|T(x_i) - x_i| > 2 n^{1.5}$, which now contradicts the fact that $\gamma$ achieves $l_2$-chromatic number at most $B \leq n^2$. So the second stage cost is zero if and only if 
there exist $x_i,x_j \in {\cal X}, x_i \neq x_j, z_a, z_b \in {\cal Z}$ such that Eq. (\ref{eq:ssum}) is satisfied. Now observe we can write Eq. (\ref{eq:ssum})
\[ x_i + \gamma(i) + z_a = x_j + \gamma(j) + z_b \] or 
\begin{equation} \label{eq:temp1} x_i + z_a - x_j - z_b = \gamma(j) - \gamma(i). \end{equation} Now the way $Z$ was constructed in Definition \ref{def:reduc} means that there exist neighbors $c,d$ and neighbors $e,f$ such that 
\[ z_a = \frac{x_c - x_d}{2}, ~~ z_b = \frac{x_e-x_f}{2}.\] Plugging this into Eq. (\ref{eq:temp1}) and doubling both sides, 
\[ 2 x_i + x_c - x_d - 2x_j - x_e + x_f = 2 \left[ \gamma(j) - \gamma(i) \right] \] 
or 
\begin{equation} \label{eq:sidon-contra}
 (x_i + x_i + x_c + x_f) - (x_j + x_j + x_d + x_e) = 2 (\gamma(j) - \gamma(i))
\end{equation}

Now we consider two possibilities both of which lead to a contradiction. The left-hand side of Eq. (\ref{eq:sidon-contra}) is either zero or nonzero. 

If it is zero, then since $x_i \neq x_j$, and  $\{x_1, \ldots, x_n\}$ being a Sidon set of order $4$, we must have 
\[ x_i = x_d = x_e ~~~ \mbox{ and } ~~~ x_j = x_c = x_f. \] But this means that  $(x_j - x_i)/2 = (x_c - x_d)/2 \in {\cal Z}$ so that by Lemma \ref{lem:uniquenoise} we have that $i$ and $j$ are neighbors. But since the left-hand side of  Eq. (\ref{eq:sidon-contra}) is zero, we have that $\gamma(j) = \gamma(i)$ for a pair of neighbors $i,j$, a contradiction. 

On the other hand, if the left-hand side of Eq. (\ref{eq:sidon-contra}) is nonzero, then, since every $x_i$ is a multiple of $l$ by construction (recall Definition \ref{def:reduc}), the same left-hand side must have absolute value at least $l$. It follows that 
\[ | \gamma(j) - \gamma(i)| \geq \frac{l}{2} > 2 n^{1.5}, \] where the strict inequality used the definition of $l$. Thus 
%by the triangle inequality,
%\[ |\gamma(j)| + |\gamma(i)| > 2 \sqrt{Bn}, \] and we conclude that
 at least one of $|\gamma(i)|, |\gamma(j)|$ is strictly bigger than $n^{1.5}$. But this contradicts that 
%\[ \frac{1}{n} \sum_{i=1}^n \gamma^2(i) > B, \] contradiction our assumption that 
the $l_2$-chromatic number is at most $B \leq n^2$. This concludes the proof. 
\end{proof}

\medskip

%We have thus established that the question of whether the $l_2$-chromatic number is at most $B$ can be encoded as a particular instance of the discrete Witsenhausen problem. 
We now turn to an analysis of the $l_2$-chromatic number.  We begin with a lemma which shows that the $l_2$-chromatic number is not very far from the ordinary chromatic number. Recall that we use the notation $\kappa(G)$ for the ordinary chromatic number of  $G$. 

\medskip

\begin{lemma} \[ \kappa^2 \geq  \gamma^*  \geq \frac{1}{n} \frac{(\kappa-2)^3}{12}  \] \label{lem:l2}
\end{lemma}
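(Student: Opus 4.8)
The plan is to prove the two inequalities separately, treating the lower bound as the substantive part.

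For the upper bound $\gamma^* \le \kappa^2$, I would start from an optimal ordinary coloring of $G$, which uses exactly $\kappa$ colors. Since Definition \ref{def:l2chrom} only requires that $\gamma$ assign distinct integers to adjacent vertices, I am free to relabel the $\kappa$ colors as the $\kappa$ integers lying closest to the origin, i.e. a symmetric block $\{-\lfloor \kappa/2\rfloor, \ldots, \lceil \kappa/2\rceil - 1\}$. Every vertex then receives a value of absolute value at most $\lceil \kappa/2\rceil \le \kappa$, so $\gamma^2(i)\le \kappa^2$ for each $i$, and averaging gives $\gamma^* \le \kappa^2$. This direction is essentially bookkeeping.

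For the lower bound, the key observation is that any feasible $\gamma$ is in particular a proper coloring of $G$, so it must take at least $\kappa$ distinct integer values. I would then bound the objective from below by passing from a sum over vertices to a sum over the distinct values used: since each distinct value is attained by at least one vertex,
\[ \sum_{i=1}^n \gamma^2(i) \;\ge\; \sum_{v \in \mathrm{range}(\gamma)} v^2 \;\ge\; \min\Big\{ \textstyle\sum_{v\in A} v^2 \;:\; A\subseteq \Z,\ |A|=\kappa \Big\}. \]
This reduces the problem to a clean extremal question about integers: among all sets of $\kappa$ distinct integers, which minimizes the sum of squares? The minimizer is the symmetric block of $\kappa$ integers centered at $0$, and evaluating the sum of squares there gives a quantity of order $\kappa^3/12$. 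I would verify that this exact minimum is at least $(\kappa-2)^3/12$ by splitting into the odd case $\kappa=2m+1$, where the minimizing set is $\{-m,\dots,m\}$ and the sum of squares equals $\tfrac{1}{3}m(m+1)(2m+1) = \tfrac{\kappa^3-\kappa}{12}$, and the even case $\kappa = 2m$, where the minimizing set is $\{-(m-1),\dots,m\}$. In each case the desired inequality reduces to a polynomial inequality in $\kappa$ (or $m$) with a manifestly nonnegative remainder, which I would check directly; dividing by $n$ then yields $\gamma^* \ge \tfrac{1}{n}\tfrac{(\kappa-2)^3}{12}$.

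The main obstacle is less a conceptual difficulty than making sure the reduction to distinct values is airtight and that the slack introduced by replacing the exact minimum $\tfrac{\kappa^3 - \kappa}{12}$ (and its even-$\kappa$ counterpart) by the clean bound $(\kappa-2)^3/12$ genuinely holds for all $\kappa \ge 1$. This is precisely where the ``$-2$'' in the statement originates, absorbing the parity correction and the $-\kappa$ term.
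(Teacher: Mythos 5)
Your proof is correct, and the upper bound is handled essentially as in the paper (the paper just uses the colors $\{1,\dots,\kappa\}$ rather than a block centered at the origin; either gives $\gamma^2(i)\le\kappa^2$). For the lower bound, however, your route is genuinely different in structure. The paper argues about the \emph{optimal} $\gamma$: it translates $\gamma$ so that the smallest interval $I$ containing its range is symmetric about the origin, argues via an exchange that every integer in $I$ must actually be attained, deduces $\gamma^*\ge \tfrac{2}{n}(1^2+\cdots+a^2)\ge \tfrac{2a^3}{3n}$ with $|I|\in\{2a+1,2a+2\}$, and combines this with $\kappa\le|I|\le 2a+2$ to get $(\kappa-2)^3\le 12n\gamma^*$. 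You instead work with an \emph{arbitrary} feasible $\gamma$, use only that a proper coloring must take at least $\kappa$ distinct integer values, and reduce to the extremal problem of minimizing $\sum_{v\in A}v^2$ over $\kappa$-element integer sets, which you solve exactly and then compare to $(\kappa-2)^3/12$ by a parity case split. The two arguments land on the same quantitative fact (the sum of squares of many distinct integers grows cubically), but yours avoids both the translation step --- which in the paper is a little delicate, since translating changes the objective --- and the ``every value in $I$ is used'' exchange argument, at the cost of having to justify the extremal claim about integer sets and check two polynomial inequalities. Your version is arguably the more robust of the two; just make sure to state explicitly that the symmetric block minimizes the sum of squares (a one-line swap argument) and to note that the bound is vacuous when $\kappa\le 2$, where $(\kappa-2)^3\le 0$.
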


\medskip

\begin{proof} For the first inequality, simply consider taking $\gamma(i)$ to be the color of vertex $i$, represented by an integer in the set  $\{1, \ldots, \kappa\}$, using a coloring that minimizes the number of colors used. 

For the second inequality, consider the optimal $\gamma$ in the definition of $l_2$-coloring. Let us translate the  $\gamma$ so that the smallest interval $I$ containing its range is symmetric about the origin, i.e., it equals either $[-a,a]$ or $[-a,a+1]$.  Observe that every element in $I$ is used, i.e., every element in $I$ equals $\gamma(i)$ for some $i$, for else it would be possible to obtain a $\gamma$ with smaller $l_2$ chromatic number. This implies that \[ \gamma^* \geq 2 \frac{1}{n} (1^2 + \cdots + a^2) \geq \frac{2}{3} \frac{a^3}{n}. \] On the other hand, the chromatic number is at most $2(a+1)$. Thus 
\[ \kappa \leq 2a+2 \leq 2 ((3/2)n \gamma^*)^{1/3} + 2 \] or 
\[ (\kappa - 2)^3 \leq 12 n \gamma^*, \] which is a rearrangement of the second inequality. 
\end{proof} 

\medskip

\begin{lemma} Unless $P=NP$,  for any $\epsilon > 0$, there exists no polynomial time algorithm which, given an undirected graph on $n$ vertices, returns a number between $\gamma^*$ and $n^{2-\epsilon} \gamma^*$. \label{np-hard}
\end{lemma}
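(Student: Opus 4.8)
The plan is to reduce from the celebrated inapproximability of the ordinary chromatic number: it is NP-hard to approximate $\kappa(G)$ to within a multiplicative factor $n^{1-\epsilon}$ for any $\epsilon>0$ (Zuckerman, building on Feige--Kilian). The bridge between the two problems is exactly the two-sided sandwich of Lemma \ref{lem:l2}, which pins $\gamma^*$ to $\kappa$ up to the discrepancy between $\kappa^2$ and $(\kappa-2)^3/(12n)$. The essential point is that $\gamma^*$ scales like the \emph{cube} of a length while $\kappa$ scales like the length itself, so a cube-root estimator converts the allowed $n^{2-\epsilon}$ slack in $\gamma^*$ into strictly sub-$n$ slack in $\kappa$.

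Concretely, suppose for contradiction that a polynomial-time algorithm $A$ returns, on every $n$-vertex graph $G$, a number $\phi$ with $\gamma^*\le\phi\le n^{2-\epsilon}\gamma^*$. I would run $A$ on $G$ unchanged (so the vertex count $n$ is preserved) and output the estimate $\hat\kappa:=(12n\phi)^{1/3}+2$. Using $\phi\ge\gamma^*$ together with the lower bound $\gamma^*\ge(\kappa-2)^3/(12n)$ of Lemma \ref{lem:l2} gives $(12n\phi)^{1/3}\ge\kappa-2$, hence $\hat\kappa\ge\kappa$, so $\hat\kappa$ is a genuine upper bound on the chromatic number. For the reverse direction I would combine $\phi\le n^{2-\epsilon}\gamma^*$ with the upper bound $\gamma^*\le\kappa^2$ of Lemma \ref{lem:l2} to get
\[ \hat\kappa \le \left(12\, n^{3-\epsilon}\kappa^2\right)^{1/3}+2 = 12^{1/3}\, n^{1-\epsilon/3}\,\kappa^{2/3}+2. \]

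Since $\kappa^{2/3}\le\kappa$ and $2\le 2\kappa$, the right-hand side is at most $(12^{1/3}n^{1-\epsilon/3}+2)\kappa$, and because $n^{1-\epsilon/4}=n^{1-\epsilon/3}\cdot n^{\epsilon/12}$ with $n^{\epsilon/12}\to\infty$, this is at most $n^{1-\epsilon/4}\kappa$ once $n$ is large enough; the finitely many small graphs can be handled by computing $\kappa$ exactly. Thus $\kappa\le\hat\kappa\le n^{1-\epsilon/4}\kappa$, a polynomial-time $n^{1-\epsilon/4}$-approximation of the chromatic number, contradicting the cited hardness with $\epsilon'=\epsilon/4$. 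I expect the only delicate point to be bookkeeping rather than a genuine obstacle: one must verify both that the cube-root estimator really transforms the $n^{2-\epsilon}$ factor into a sub-$n$ factor, and that the mismatch between the $\kappa^2$ and $(\kappa-2)^3/(12n)$ ends of Lemma \ref{lem:l2} does not erode the approximation guarantee. Both are settled by the $\kappa^{2/3}\le\kappa$ step and the asymptotic absorption above, so the reduction goes through.
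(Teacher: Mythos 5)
Your proof is correct and follows essentially the same route as the paper: both transfer Zuckerman's $n^{1-\epsilon}$-inapproximability of the chromatic number to $\gamma^*$ via the two-sided bound of Lemma \ref{lem:l2}, with the cubic scaling supplying the $n^{2-\epsilon}$ factor. The only difference is presentational: you package the reduction as an explicit cube-root estimator $\hat\kappa=(12n\phi)^{1/3}+2$ and cite the integral chromatic-number hardness directly, whereas the paper works with the gap (promise) formulation and first derives that hardness from the fractional chromatic number together with Lov\'asz's logarithmic bound.
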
 

\begin{proof} It is possible to define the notion of a {\em fractional chromatic number} of a graph $G$, denoted by $\chi_f(G)$. We avoid giving a definition here\footnote{The interested reader may look at \url{http://mathworld.wolfram.com/FractionalChromaticNumber.html}} because we only need to use the following two facts about it: 
\begin{itemize} \item In Theorem 1.2 of \cite{zuckerman2006linear}, it was shown that, for any $\epsilon > 0$, it is NP-hard to distinguish between graphs $G$ on $n$ vertices with fractional chromatic number of $n^{\epsilon}$ from graphs with fractional chromatic number of $n^{1-\epsilon}$.
\item In \cite{lovasz1975ratio}, it was shown that the fractional chromatic number is a logarithmic approximation to the chromatic number, i.e., \[ \frac{\kappa(G)}{1+\log n} \leq \chi_f(G) \leq \kappa(G),\] where, recall, $\kappa(G)$ is the ordinary chromatic number; for more details, see the discussion in Section 3.3 of \cite{feige1998zero}.
\end{itemize} 

As remarked in \cite{feige1998zero}, these two facts imply that it is NP-hard to distinguish between graphs of chromatic number $n^{\epsilon}(1+\log n)$ and graphs with chromatic number $n^{1-\epsilon}$. Applying Lemma \ref{lem:l2}, it follows that it is NP-hard to distinguish between graphs with $\gamma^* \leq n^{2 \epsilon} (1+\log n)^2$ and graphs with $\gamma^* \geq \frac{1}{12n} (n^{1-\epsilon} - 2 )^3$.  We conclude that, for any $\epsilon > 0$, it is NP-hard to approximate $\gamma^*$ within a multiplicative factor of  less than
\[ \frac{(n^{1-\epsilon}-2)^3}{12 n^{1+2\epsilon} (1 + \log n)^2}. \]  Because this quantity can be lower bounded by $n^{2 - O(\epsilon)}$, this completes the proof. 
\end{proof} 

Finally, we are now able to provide a proof of our main result. 

\begin{proof}[Proof of Theorem \ref{thm:innap}] Consider a graph $G$ with $l_2$-chromatic number of $B$. Since every vertex can be colored by a different color, we have that $B \leq n^2$. Consider the discrete Witsenhausen problem constructed in Definition \ref{def:reduc}: putting together Lemma \ref{lem:ch->dw} and Lemma \ref{lem:uniquenoise}, we obtain that its optimal solution solution has cost $B$. Now observing that by Lemma \ref{np-hard}, it is NP-hard to approximate $B$ to within a multiplicative factor of $n^{2-\epsilon}$ completes the proof.

\end{proof}

\section{Proof of Theorem \ref{thm:polybound} \label{sec:approximability}}

We now describe an algorithm for the Witsenhausen problem whose approximation ratio is polynomial in $|\cal X|$ and $|\cal Z|$. We begin with an informal discussion intended to motivate our approach.  Parallelling our arguments in the previous section, we'll adopt the convention of saying that $i$ and $j$ ``collide'' if 
\begin{equation} \label{eq:coll} T(x_i) + z_a = T(x_j) + z_b,
\end{equation} for some $z_a, z_b \in \cal Z$. 

Our approach is simple: we ``interpolate'' between the optimal solution when $K=0$ (which results in $T(x_i)=x_i$) and $K \rightarrow +\infty$ (which results in a $T$  that avoids any collisions) by fixing the $k$ elements in ${\cal X}$ with the highest probabilities, and moving all the other entries in ${\cal X}$ to avoid collisions. We do this for all $k=1, \ldots, n$ where $n=|{\cal X}|$ and choose the best result. 

We outline the approach in the algorithm box below,
where we use the convention that  $p_i$ is the probability of $X_0 = x_i$ and $$p_1 \geq p_2 \geq \cdots \geq p_n.$$

\begin{algorithm}
\caption{ \label{alg}}
\begin{algorithmic}[1]
\STATE Input: distributions of $X_0,Z$
\FOR  {$k=0, \ldots, n$} 
        \STATE Set $T^k$ be a map that map that fixes $x_1, \ldots, x_k$ and maps $x_{k+1}, \ldots, x_n$ to values ensuring there are no collisions except between $x_1, \ldots, x_k$. \label{step:tselect}
        \STATE Choose $\delta^k$ to be the optimal second-stage map given $T^k$.\label{step:deltaselect}
\ENDFOR
\STATE Choose the pair among $(T^k,\delta^k), k = 1, \ldots, n$ with lowest Witsenhausen cost. \label{step:last}
\end{algorithmic}
\end{algorithm}

It is easy to see that this is a polynomial-time algorithm. Indeed,  step \ref{step:last} can easily be done in polynomial time: the cost of each pair $T^i, \delta^i$ is a sum over $|\cal X| |\cal Z|$ values. Second, step \ref{step:deltaselect} can also be done without difficulty, since the selection of the best second-stage map given the transportation map is an ordinary least-squares estimation problem. The following lemma discusses how to do step \ref{step:tselect} and implicitly gives an upper bound on the transportation cost of the $T^i$ chosen in that step.

\smallskip 

\begin{lemma}  Step \ref{step:tselect} can be done in polynomial time with $|T^k(x_j) - x_j| \leq |{\cal X}| |{\cal Z}|^2$ for all $j$. \label{lem:tselect}
\end{lemma}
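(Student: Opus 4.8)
The plan is to realize Step \ref{step:tselect} by a greedy one-point-at-a-time placement and to control the displacement with a pigeonhole count. The starting observation is a clean reformulation of what it means to collide: by Eq. (\ref{eq:coll}), indices $i$ and $j$ collide precisely when $T(x_i) - T(x_j)$ lies in the difference set ${\cal Z} - {\cal Z} := \{z_a - z_b : z_a, z_b \in {\cal Z}\}$. Since ${\cal Z} - {\cal Z}$ has at most $|{\cal Z}|^2$ elements (one for each ordered pair of noise values), committing the value of $T^k$ at any single point $x_i$ forbids at most $|{\cal Z}|^2$ integers as candidate values of $T^k$ at any other point, namely the translate $T^k(x_i) + ({\cal Z} - {\cal Z})$.

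First I would fix $T^k(x_i) = x_i$ for $i = 1, \ldots, k$ and then process $x_{k+1}, \ldots, x_n$ in turn. When it is time to assign $T^k(x_j)$, at most $n-1$ values of $T^k$ have already been committed (the $k$ fixed points together with the points placed so far). Each committed value rules out at most $|{\cal Z}|^2$ candidate integers, so the set of forbidden integers has size at most $(n-1)|{\cal Z}|^2 < |{\cal X}||{\cal Z}|^2$. Consequently, among the $|{\cal X}||{\cal Z}|^2 + 1$ consecutive integers $x_j, x_j + 1, \ldots, x_j + |{\cal X}||{\cal Z}|^2$ at least one is not forbidden; choosing $T^k(x_j)$ to be such an integer yields both the required non-collision property and the bound $|T^k(x_j) - x_j| \leq |{\cal X}||{\cal Z}|^2$. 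For the fixed points $j \leq k$ the bound holds trivially since the displacement is zero.

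By construction every newly placed point avoids colliding with all previously committed points, so after processing all of $x_{k+1}, \ldots, x_n$ the only collisions that remain are among $x_1, \ldots, x_k$, exactly as Step \ref{step:tselect} requires; in particular the committed values are pairwise distinct, so $T^k$ is a well-defined map on ${\cal X}$. Polynomial-time execution is then immediate: for each of the $n$ points one forms the forbidden set by enumerating the (at most $n$) committed values against the (at most $|{\cal Z}|^2$) differences in ${\cal Z} - {\cal Z}$, and then scans a window of $|{\cal X}||{\cal Z}|^2 + 1$ integers for a free value, all of which is polynomial in $|{\cal X}|$ and $|{\cal Z}|$.

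The one genuinely load-bearing step is the pigeonhole count of the second paragraph: everything hinges on the fact that each already-committed value blocks only $|{\cal Z}|^2$ integers, so that fewer than $|{\cal X}||{\cal Z}|^2$ integers are ever forbidden and a free slot is guaranteed within the claimed distance of $x_j$. The remaining verifications—that the difference-set reformulation of collisions is correct, that distinctness of the committed values makes $T^k$ a valid map, and that the procedure runs in polynomial time—are routine.
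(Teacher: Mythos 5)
Your proposal is correct and follows essentially the same argument as the paper: a greedy sequential assignment in which each already-committed value forbids at most $|{\cal Z}|^2$ integers via the difference set ${\cal Z}-{\cal Z}$, so that fewer than $|{\cal X}||{\cal Z}|^2$ values are ever blocked and a non-colliding choice exists within distance $|{\cal X}||{\cal Z}|^2$ of $x_j$. The only cosmetic difference is that the paper picks the closest admissible value to $x_j$ while you scan a one-sided window; both yield the same bound and the same polynomial running time.
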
 

\smallskip

\begin{proof} Starting with $j=k+1$, we sequntially set $T(x_j)$ to be the closest value to $x_j$ that does not yield a collision; when we have set $T(x_n)$, we are done. When we consider $x_m$, looking at Eq. (\ref{eq:coll}), we have to avoid \[ T(x_m) = T(x_j) + z_b - z_a, ~~~ j <m, ~ z_a, z_b \in \cal Z, \] which rules out at most $(m-1)|{\cal Z}|^2$ different values. It follows that we can always assign $T(x_m)$ so that $|T(x_m) - x_m| \leq |{\cal X}| |{\cal Z}|^2$. Moreover, each step of this procedure requires examining at most $|{\cal X}| |{\cal Z}|^2$ possibilities, and the number of steps is at most $|\cal X|$, so this procedure is polynomial time. 
\end{proof} 

We can now proceed to the proof of Theorem \ref{thm:polybound}. Our first step is to introduce some notation. We let $\Phi_1(p_{X_0},p_Z,T,\delta)$ to be the first-stage (transporation) cost when $X_0,Z,T,\delta$ are the random variables and maps in the discrete Witsenhausen problem. Likewise, we will use $\Phi_2(p_{X_0},p_Z,T,\delta)$ to denote the second-stage cost. Occasionally, we will omit to write the $\delta$ in this notation, and it should be
 understood that $\delta$ is then selected to be the optimal choice for the given $T$.

\begin{proof}[Proof of Theorem \ref{thm:polybound}] We claim that  Algorithm \ref{alg} with the selection procedure of Lemma \ref{lem:tselect} returns a solution with cost $|{\cal X}|^3 |{\cal Z}|^4 \Phi^*$ where $\Phi^*$ is the optimal Witsenhausen cost.

Indeed, consider the optimal strategy $T^*, \delta^*$.  Let $l$ be the smallest index such that $T^*(x_l) \neq x_l$ (we can assume such an index exists, because otherwise Algorithm \ref{alg} finds the optimal solution when $k=n$ and there is nothing to prove). The transport cost incurred by $T^*$ is at least $p_l$. 

Now consider the $(T^k, \delta^k)$ when $k=l$. The transport cost incurred by $T^l$ is upper bounded by $(|{\cal X}| p_l)(|{\cal X}| |{\cal Z}|^2)^2$ because the probability of not landing at a fixed point is at most  $|{\cal X}| p_l$, in which case one moves by at most $|{\cal X}| |{\cal Z}|^2$ as a consequence of Lemma \ref{lem:tselect}. Thus the transport cost incurred by $T^l$ is at most $p_l |{\cal X}|^3 |{\cal Z}|^4$. Putting the last two paragraphs together,  
\begin{equation} \label{eq:first-stage} \Phi_1 (p_{X_0}, p_Z, T^l, \delta^l) \leq |{\cal X}|^3 |{\cal Z}|^4 \Phi_1^*(p_{X_0}, p_Z) 
\end{equation}

We now consider the second-stage cost of $T^l, \delta^l$. By construction whenever one of $(x_{l+1}, \ldots, x_n)$ is generated, the second-stage cost is zero. Defining $p'$ to be the distribution proportional to $(p_1, p_2, \ldots, p_{l-1})$, this means that 
\begin{equation} \label{eq:ss1} \Phi_2(p_{X_0}, p_Z, T^l, \delta^l) = (p_1 + \cdots p_{l-1}) \Phi_2(p', p_Z, I). \end{equation} where we use $I$ for the identity map and we used that $T^l$ fixes $x_1, \ldots, x_{l-1}$. 

On the other hand, consider the second stage cost under $T^*,\delta^*$. Let  $A$ be the event that $X_0 \in \{ x_1, \ldots, x_{l-1}\}$. The second-stage cost cannot be increased if the first agent transmits to the second agent whether $A$ has occurred or not. Thus 
\begin{equation} \Phi_2(p_{X_0}, p_Z, T^*, \delta^*) \geq (p_1 + \cdots p_{l-1}) \Phi_2 (p', p_Z, I). \label{eq:ss2} \end{equation}

Finally, comparing Eq. (\ref{eq:ss1}) and Eq. (\ref{eq:ss2}) %and observing that both $T$ and $T^*$ are the identity map when $X_0$ has distribution $p'$ (because both fix $x_1, \ldots, x_{l-1}$), 
we obtain  $\Phi_2(p_{X_0}, p_Z, T^*, \delta^*) \geq \Phi_2(p_{X_0}, p_Z, T^l, \delta^l)$. Putting this together with  Eq. (\ref{eq:first-stage}) completes the proof.  
\end{proof}

%%%%%%%%%%%%%%%%%%%%%%%%%%%%%%%%%%%%%%%%%%%%%%%%%%%%%%%%%%%%%%%%%%%%%%%%%%%%%%%%

%%%%%%%%%%%%%%%%%%%%%%%%%%%%%%%%%%%%%%%%%%%%%%%%%%%%%%%%%%%%%%%%%%%%%%%%%%%%%%%%

\section{Acknowledgments} The author would like to thank Dr. S. Kopparty and Dr. A. Bhangale for suggesting the reduction between the $l_2$-chromatic number and the ordinary chromatic number. The author would 
also like to acknowledge Dr. M. Agarwal for multiple discussions of this problem. 

%%%%%%%%%%%%%%%%%%%%%%%%%%%%%%%%%%%%%%%%%%%%%%%%%%%%%%%%%%%%%%%%%%%%%%%%%%%%%%%%

\bibliography{witsb}{}
\bibliographystyle{unsrt}

\end{document}